\newtheorem{theorem}{Theorem}
\newtheorem{lemma}[theorem]{Lemma}
\newtheorem{definition}[theorem]{Definition}
\newcommand{\sm}{\left(\begin{smallmatrix}}
\newcommand{\esm}{\end{smallmatrix}\right)}
\theoremstyle{remark}
\newtheorem{remark}{Remark}
\def\@eqnnum{(\thesection.\theequation)}
\newcommand{\SL}{\mathrm{SL}}
\newcommand{\Mp}{\mathrm{Mp}}
\begin{document}

\title[On harmonic weak Maass forms]
{On harmonic weak Maass forms of half integral weight}

\author{Bumkyu Cho}
\address{Department of Mathematics, Pohang University of Science and
Technology, San 31, Hyoja-dong, Nam-gu, Pohang-si, Gyeongsangbuk-do
790-784, Republic of Korea}

\email{bam@math.kaist.ac.kr}

\author{YoungJu Choie}

\address{Department of Mathematics, Pohang Mathematics Institute(PMI),  POSTECH, Pohang, Korea}

\email{yjc@postech.ac.kr}

\subjclass[2000]{Primary 11F11, 11F30; Secondary 11F37, 11F50}
\thanks{The first author was partially supported by BK21 at
POSTECH, the Tae-Joon Park POSTECH Postdoctoral Fellowship, and
NRF 2010-0008426. The second author was partially supported by NRF
2009008-3919 and NRF-2010-0029683}

\keywords{}

\dedicatory{}

\begin{abstract}
Since Zweger \cite{Ze} found a connection between mock theta
functions and harmonic Maass forms this subject has been a vast
research interest recently. Motivated by Zweger's work harmonic
Maass-Jacobi forms were introduced in \cite{BR}, which include the
classical Jacobi forms. We show the isomorphisms among the space $H_{k +
\frac{1}{2}}^{+}(\Gamma_0(4m))$ of (scalar valued) harmonic weak Maass forms of half integral weight whose Fourier coefficients are
supported on suitable progressions, the space $H_{k + \frac{1}{2}, \bar{\rho}_L}$ of vector valued ones, and the space $\mathbb{\widehat{
J}}_{k+1,m}^{cusp}$ of certain harmonic
Maass-Jacobi forms of integral weight:
$$H_{k + \frac{1}{2}}^{+}(\Gamma_0(4m)) \simeq H_{k + \frac{1}{2}, \bar{\rho}_L} \simeq \mathbb{\widehat{ J}}_{k+1,m}^{cusp}$$
for $k$ odd and $m = 1$ or a prime. This
is an extension of the result developed by Eichler and Zagier
\cite{EZ}, which showed the isomorphisms among the Kohnen plus space $M_{k + \frac{1}{2}}^{+}(\Gamma_0(4m))$ of (scalar valued) modular forms of half integral weight, the space $M_{k + \frac{1}{2}, \bar{\rho}_L}$ of vector valued ones, and the space $J_{k+1,m}$ of Jacobi forms of integral weight:
$$M_{k + \frac{1}{2}}^{+}(\Gamma_0(4m)) \simeq M_{k +
\frac{1}{2}, \bar{\rho}_L} \simeq J_{k+1,m}.$$

\end{abstract}

\maketitle

\section{\bf{Introduction and statement of a result}}

Let $k$ be an integer, and $m$ a positive integer. We denote by
$M_{k + \frac{1}{2}}(\Gamma_0(4m))$  and $M_{k+
\frac{1}{2}}^!(\Gamma_0(4m))$ the space of holomorphic and weakly
holomorphic  modular forms, respectively, of weight $k +
\frac{1}{2}$ for $\Gamma_0(4m)$.
Further we define a subspace $M_{k +
\frac{1}{2}}^{+}(\Gamma_0(4m))$ of $M_{k +
\frac{1}{2}}(\Gamma_0(4m))$ by
\[ M_{k + \frac{1}{2}}^{+}(\Gamma_0(4m)) := \{ f \in M_{k + \frac{1}{2}}(\Gamma_0(4m)) \, | \, c_f(n)
= 0 \mbox{ unless } (-1)^k n \equiv \Box \bmod 4m \}. \]

Let $L$ be the lattice $2m\mathbb Z$ equipped with the quadratic
form $Q(x) = x^2/4m$. Then its dual $L'$ equals $\mathbb Z$. Let
$\rho_L$ denote the Weil representation associated to the
discriminant form $(L'/L, Q)$, and $\bar{\rho}_L$ its dual
representation. We denote by $M_{k + \frac{1}{2}, \rho_L}$ the
space of $\mathbb C[L'/L]$-valued holomorphic modular forms
of weight $k + \frac{1}{2}$ and type $\rho_L$. Then Eichler and
Zagier \cite[Theorems 5.1, 5.4, and 5.6]{EZ} proved the following
isomorphisms: for $k$ odd and $m=1$ or a prime,
$$
 M_{k + \frac{1}{2}}^{ +}(\Gamma_0(4m))  \simeq M_{k +
\frac{1}{2}, \bar{\rho}_L} \simeq J_{k+1,m}.$$
Here $J_{k+1,m}$ is the space of Jacobi forms
of weight $k+1$ and index $m$ on the full modular group $\Gamma(1)
$.

Our result extends this isomorphism to the spaces of harmonic weak Maass forms
(Theorem \ref{Theorem - isomorphism for half integral weight case} and
Theorem \ref{jacobi}): if  $k$ is odd and $m = 1$ or a prime,

 $$ H_{k + \frac{1}{2}}^{ +}(\Gamma_0(4m))  \simeq H_{k +
\frac{1}{2}, \bar{\rho}_L} \simeq \mathbb{\widehat{
J}}_{k+1,m}^{cusp}.$$


Here $H_{k + \frac{1}{2}}^{ +}(\Gamma_0(4m))$, $H_{k +
\frac{1}{2}, \bar{\rho}_L}$, and $ \mathbb{\widehat{
J}}_{k+1,m}^{cusp}$ are the spaces consisting of corresponding
harmonic ones (see Section 2).

\medskip

In order to state our main results more precisely,
we let $k$ be an integer and fix $m = 1$ or a prime.
We denote by $H_{k + \frac{1}{2}}(\Gamma_0(4m))$ the space of harmonic weak
Maass forms of weight $k + \frac{1}{2}$ for $\Gamma_0(4m)$ (see
Section 2.1). Then it is known (see, for instance,
\cite{BF}) that $f \in H_{k + \frac{1}{2}}(\Gamma_0(4m))$ has a
unique decomposition $f = f^+ + f^-$, where
\begin{eqnarray*}
f^+(\tau) & = & \sum_{n \gg -\infty} c_f^+(n) q^n, \\
f^-(\tau) & = & \sum_{n < 0} c_f^-(n) \Gamma \left( \frac{1}{2} - k, 4\pi
|n| y \right) q^n.
\end{eqnarray*}
Here $\Gamma(a, y) = \int_y^\infty e^{-t} t^{a - 1} dt$ denotes
the incomplete Gamma function. We define a subspace $H_{k +
\frac{1}{2}}^{+}(\Gamma_0(4m))$ of $H_{k +
\frac{1}{2}}(\Gamma_0(4m))$ by
\[ H_{k + \frac{1}{2}}^{+}(\Gamma_0(4m)) := \{ f \in
H_{k + \frac{1}{2}}(\Gamma_0(4m)) \, | \, c_f^\pm(n) = 0 \mbox{
unless } (-1)^k n \equiv \Box \bmod 4m \}. \]

Let $H_{k + \frac{1}{2}, \rho_L}$ denote the space of $\mathbb
C[L'/L]$-valued harmonic weak Maass forms of weight $k +
\frac{1}{2}$ and type $\rho_L$ (see Section 2.2). We denote the
standard basis elements of the group algebra $\mathbb C[L'/L]$ by
$\mathfrak e_\gamma$ for $\gamma \in L'/L$. Suppose that the
discriminant form $(L'/L, Q)$ is given by $(\mathbb Z/2m\mathbb Z,
Q)$, where $Q(\gamma) = \gamma^2/4m$ for $\gamma \in \mathbb
Z/2m\mathbb Z$ with the signature $(b^+,b^-).$  Then the level of
$L$ equals $4m$, and $b^+ - b^- \equiv 1 \bmod 8$. For example if
we take
\[ L = \{ X = \left( \begin{smallmatrix} b & -a/m \\ c & -b
\end{smallmatrix} \right) \in \mathrm{Mat}_2(\mathbb Q) \, | \, a,
b, c \in \mathbb Z \} \] with $Q(X) = -m\det(X)$, then $(b^+, b^-) =
(2, 1)$ and
\[ L' = \{ X = \left( \begin{smallmatrix} b/2m & -a/m \\ c & -b/2m
\end{smallmatrix} \right) \in \mathrm{Mat}_2(\mathbb Q) \, | \, a,
b, c \in \mathbb Z \}. \] For a given $f \in H_{k +
\frac{1}{2}}^{+}(\Gamma_0(4m))$ we define a $\mathbb
C[L'/L]$-valued function $F = \sum_{\gamma \in \mathbb Z / 2m
\mathbb Z} F_\gamma \mathfrak e_\gamma$ by
\begin{eqnarray*}
F_\gamma(\tau) := \frac{1}{s(\gamma)} \sum_{n \in \mathbb Z \atop
(-1)^k n \equiv \gamma^2 \bmod 4m} c_f(n, y/4m) q^{n/4m}.
\end{eqnarray*}
Here $c_f(n, y) := c_f^+(n) + c_f^-(n)\Gamma(\frac{1}{2} - k, 4 \pi
|n| y)$, and $s(\gamma) = 1$ if $\gamma \equiv 0, m \bmod 2m$, and
$2$ otherwise.

\begin{theorem}\label{Theorem - isomorphism for half integral weight case}
With the notation as above we have the following.

(1) If $k$ is even, then the map $f \mapsto F$ defines an
isomorphism of $H_{k + \frac{1}{2}}^{+}(\Gamma_0(4m))$ onto $H_{k
+ \frac{1}{2}, \rho_L}$.

(2) If $k$ is odd, then the map $f \mapsto F$ defines an
isomorphism of $H_{k + \frac{1}{2}}^{+}(\Gamma_0(4m))$ onto $H_{k
+ \frac{1}{2}, \bar{\rho}_L}$.
\end{theorem}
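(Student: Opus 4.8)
The plan is to verify directly that $f\mapsto F$ carries $H^{+}_{k+\frac12}(\Gamma_0(4m))$ into the asserted vector‑valued space, to exhibit an explicit inverse, and thereby to conclude that the map is a linear isomorphism. The algebraic skeleton is exactly the one Eichler and Zagier built for holomorphic forms in \cite[\S5]{EZ}; the work is to check that it survives the presence of the non‑holomorphic parts $f^-$ and of the weaker growth conditions. The argument is uniform in the parity of $k$, only the bookkeeping distinguishing $\rho_L$ from $\bar\rho_L$ changing.

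First I would dispatch \emph{harmonicity and the expansion at infinity}. Writing $\tau'=\tau/4m$, each component $F_\gamma$ is, up to the constant $1/s(\gamma)$, the partial sum of the Fourier expansion of $f$ in the variable $\tau'$ over those $n$ with $(-1)^k n\equiv\gamma^2\bmod 4m$. Since $\Delta_{k+1/2}$ acts on each Fourier mode $e^{2\pi i nx}\cdot(\text{function of }y)$ separately, $\Delta_{k+1/2}f=0$ forces it to annihilate every such partial sum, and the substitution $\tau\mapsto\tau/4m$ has trivial automorphy factor, so it preserves $\ker\Delta_{k+1/2}$; hence each $F_\gamma$ is harmonic. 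The shape of the Fourier expansion of $F$ — a holomorphic part on $n\gg-\infty$ together with a non‑holomorphic part assembled from $\Gamma(\tfrac12-k,\,\cdot\,)$ — is immediate from $f=f^++f^-$, and because $\Mp_2(\mathbb Z)$ has a single cusp this already supplies the growth condition. The transformation under $T=\sm 1&1\\0&1\esm$ is then read off from the $q^{n/4m}$‑expansion: the constraint $(-1)^k n\equiv\gamma^2\bmod 4m$ turns the scalar $e^{2\pi i n/4m}$ into $e^{2\pi i(-1)^k\gamma^2/4m}$, which is exactly the diagonal entry of $\rho_L(T)$ when $k$ is even and of $\bar\rho_L(T)$ when $k$ is odd — this is the precise point at which the parity of $k$ selects the representation.

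The crux is the transformation under $S=\sm 0&-1\\1&0\esm$. The bridge between $f$ and $F$ is the sieving identity
\[
F_\gamma(\tau)=\frac{1}{4m\,s(\gamma)}\sum_{j\bmod 4m} e\!\left(\frac{(-1)^{k+1}\gamma^2 j}{4m}\right) f\!\left(\frac{\tau+j}{4m}\right)
\]
(here $e(x):=e^{2\pi i x}$), which follows at once from the definition of $F_\gamma$ by isolating the Fourier modes $n$ with $(-1)^k n\equiv\gamma^2\bmod 4m$, and which holds term‑by‑term, hence simultaneously for $f^+$ and $f^-$. Applying $\tau\mapsto-1/\tau$, for each $j$ coprime to $4m$ the point $\tfrac{-1/\tau+j}{4m}$ is carried to a point of the form $\tfrac{\tau+j'}{4m}$, with $j'j\equiv-1\bmod 4m$, by an element of $\Gamma_0(4m)$, so the $\Gamma_0(4m)$‑modularity of $f$ converts those terms, after a standard Gauss‑sum evaluation, into the weight‑$(k+\tfrac12)$ automorphy factor times $\sum_\delta\rho_L(S)_{\gamma\delta}F_\delta(\tau)$ (resp. $\bar\rho_L(S)_{\gamma\delta}F_\delta(\tau)$); the terms with $\gcd(j,4m)>1$ are handled, using the plus‑space hypothesis and the assumption that $m=1$ or a prime, exactly as in \cite[\S5]{EZ}. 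Since the automorphy factors and the Gauss sums do not see which Fourier coefficient of $f$ they act on, this computation is literally uniform across the Fourier expansion, so the identity \cite{EZ} established for the holomorphic terms applies verbatim to the incomplete‑Gamma terms. With the $T$‑relation this gives $F|_{k+1/2}M=\rho_L(M)F$ (resp. $\bar\rho_L(M)F$) for all $M\in\Mp_2(\mathbb Z)$, i.e.\ $F\in H_{k+\frac12,\rho_L}$ (resp.\ $H_{k+\frac12,\bar\rho_L}$). This is the step I expect to be the main obstacle: not the algebra, which is that of \cite{EZ}, but the bookkeeping showing it goes through verbatim for the non‑holomorphic Eichler‑integral part and for the weaker growth at every cusp of $\Gamma_0(4m)$.

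Finally, \emph{injectivity and surjectivity}. Injectivity is clear, since every coefficient $c_f(n)$ with $(-1)^k n\equiv\Box\bmod 4m$ reappears as a coefficient of some $F_\gamma$ and in the plus space these are the only nonzero coefficients. For surjectivity I would check that the inverse is $F\mapsto f$ with $f(\tau):=\sum_{\gamma\in\mathbb Z/2m\mathbb Z}F_\gamma(4m\tau)$: harmonicity and the plus‑space support condition for $f$ are immediate (the $\gamma$‑component has Fourier support on $(-1)^k n\equiv\gamma^2\bmod 4m$ by the $T$‑eigenvalue), $T$‑invariance up to the metaplectic multiplier is clear, and the defining relations of $\Gamma_0(4m)$ together with the growth at its remaining cusps follow from the $\Mp_2(\mathbb Z)$‑equivariance of $F$ by running the sieving/Gauss‑sum computation backwards, again using $m=1$ or prime, as in \cite[Thms.~5.1, 5.4]{EZ}. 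That the two maps are mutually inverse reduces to the elementary identity $\sum_{\gamma:\,\gamma^2\equiv a\,(4m)}1/s(\gamma)=1$ for every square $a$ modulo $4m$ — which itself uses $m=1$ or prime, the factors $s(\gamma)$ being exactly what is needed to account for the identification of $\gamma$ with $-\gamma$ in $\mathbb Z/2m\mathbb Z$. As a consistency check one can verify that the correspondence intertwines the operators $\xi_{k+1/2}$ on the two sides up to an explicit constant, which reduces the statement on the level of images and kernels to the weakly holomorphic — hence essentially the holomorphic — case already treated in \cite{EZ}.
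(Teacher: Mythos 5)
There is a genuine gap at the central step, namely the transformation of $F$ under $S$. Your sieving identity expresses $F_\gamma(-1/\tau)$ through the values $f\bigl(\tfrac{j\tau-1}{4m\tau}\bigr)$ for \emph{all} $j \bmod 4m$, and only the terms with $\gcd(j,4m)=1$ can be folded back to the cusp $\infty$ by an element of $\Gamma_0(4m)$; the remaining terms involve the expansions of $f$ at the other cusps of $\Gamma_0(4m)$, about which the plus-space condition (a condition on the expansion at $\infty$ only) gives no a priori information. You defer precisely these terms to ``exactly as in [EZ, \S 5]'', but Eichler--Zagier never carry out such a computation: the relevant statement there (Theorem 5.6) is proved by comparing dimensions of the finite-dimensional spaces of holomorphic forms, an argument with no analogue for the infinite-dimensional spaces $H^{+}_{k+\frac12}(\Gamma_0(4m))$ and $H_{k+\frac12,\rho_L}$. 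This is exactly the obstruction the paper points out in Remark 1(3), and it is where a new idea is required. The paper's proof avoids the bad cusps entirely: it uses only the $\varphi(4m)$ relations coming from $j$ coprime to $4m$ (the functions $f_j$), and then shows by an explicit rank computation for the matrix $B=CA$ (Lemma \ref{Lemma - rank of B}) that these relations, together with $F_\gamma=F_{-\gamma}$, already determine $F(-1/\tau)$ when $m=1$ or a prime; harmonicity of each $F_\gamma$ also falls out of this, since $F_\gamma|_{k+\frac12}S$ is exhibited as a linear combination of the $f_j$ and $\Delta_{k+\frac12}$ commutes with the slash operator.

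A secondary point concerns the converse map $F\mapsto\sum_\gamma F_\gamma(4m\tau)$. You claim its $\Gamma_0(4m)$-modularity follows by ``running the sieving/Gauss-sum computation backwards, again using $m=1$ or prime''. In fact this direction needs no hypothesis on $m$ at all (the paper remarks that it works for arbitrary $m$); what it does need is the nontrivial fact that $\sum_{\gamma}\mathfrak e_\gamma$ is an eigenvector of $\rho_L(\widetilde M)$ for $M\in\Gamma^0(4m)$ with eigenvalue equal to the half-integral-weight theta multiplier $\bigl(\tfrac{c}{d}\bigr)\varepsilon_d^{-1}$. The paper establishes this from Shintani's formula for the matrix coefficients of $\rho_L$, Milgram's formula, and Borcherds' explicit description of the Weil representation (Lemma \ref{Lemma - trivial action by applying Shintani's} and equation (3.7)); none of this ingredient is identified in your outline. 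The portions of your argument dealing with harmonicity, the $T$-transformation, the growth conditions, injectivity, and the mutual inverseness of the two maps are fine.
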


\begin{remark}
(1) For a given vector valued modular form $F = \sum_\gamma F_\gamma
\mathfrak e_\gamma$ the map $F \mapsto f$ will be the inverse
isomorphism where $f(\tau) : = \sum_\gamma F_\gamma(4m\tau)$.

(2) If we restrict the domain on $M_{k +
\frac{1}{2}}^{!+}(\Gamma_0(4m))$, we get an isomorphism onto
$M_{k + \frac{1}{2}, \rho_L}^!$ ($k$ even), and so on.

(3) This kind of result for weakly holomorphic
modular forms of integral weight is proved by Bruinier and
Bundschuh \cite{BB}. Let $p$ be an odd prime. We write
$M_k^!(\Gamma_0(p), (\frac{\cdot}{p}))$ for the space of weakly
holomorphic modular forms of weight $k$ for $\Gamma_0(p)$ with
Nebentypus $(\frac{\cdot}{p})$. For $\epsilon \in \{ \pm 1 \}$ we
define the subspace
\[ M_k^{!\epsilon}(\Gamma_0(p), \big(\frac{\cdot}{p}\big))
:= \{ f\in M_k^!(\Gamma_0(p), \big(\frac{\cdot}{p}\big)) \, | \,
c_f(n) = 0 \mbox{ if } \big(\frac{n}{p}\big) = - \epsilon \}. \]
Let $L$ be the lattice so that the discriminant group $L'/L$ is
isomorphic to $\mathbb Z / p\mathbb Z$. On $L'/L$ the quadratic
form is equivalent to $Q(x) = \alpha x^2/p$ for some $\alpha \in
\mathbb Z / p\mathbb Z - \{ 0 \}$. Put $\epsilon =
(\frac{\alpha}{p})$. Then Bruinier and Bundschuh \cite[Theorem
5]{BB} showed that $M_k^{!\epsilon}(\Gamma_0(p),
(\frac{\cdot}{p}))$ is isomorphic to $M_{k, \rho_L}^!$. For
integral weight case Bruinier and Bundschuh's argument can be
applied to the spaces of harmonic weak Maass forms (see
\cite[Theorem 1.2]{CC}). However,  for half integral weight case,  we need another argument because Eichler and Zagier's argument depends on the dimension formulas for the spaces of holomorphic modular forms (see the
proof of \cite[Theorem 5.6]{EZ}). It is essential that our proof of Theorem \ref{Theorem - isomorphism for half integral weight case} relies on some nontrivial  properties of the Weil representation.
\end{remark}

\medskip

Next we show that the spaces in Theorem \ref{Theorem - isomorphism for half integral weight case} (2) are isomorphic to the space of
harmonic Maass-Jacobi forms recently developed by Bringmann and
Richter \cite{BR}.\\

Let $L$ be the lattice $2m \mathbb Z$ equipped with the positive
definite quadratic form $Q(x) = x^2/4m$. Then the space $J_{k, m}$
of Jacobi forms of weight $k$ and index $m$ is isomorphic to the
space $M_{k - \frac{1}{2}, \bar{\rho}_L}$ of $\mathbb
C[L'/L]$-valued holomorphic modular forms of weight $k -
\frac{1}{2}$ and type $\bar{\rho}_L$ (see \cite[Theorem 5.1]{EZ}).
Recently, Bringmann and Richter \cite{BR} introduced harmonic
Maass-Jacobi forms, which include the classical Jacobi forms. Let
$\hat{\mathbb J}_{k, m}^{cusp}$ be the space of certain harmonic
Maass-Jacobi forms
\[ \phi(\tau, z) = \sum_{n, r \in \mathbb Z \atop D \ll \infty} c^+(n, r) q^n \zeta^r + \sum_{n, r \in \mathbb Z \atop D > 0} c^-(n, r) \Gamma \left( \frac{3}{2} - k, \frac{\pi D y}{m} \right) q^n \zeta^r \]
of weight $k$ and index $m$ (see Section 2.3). Here, $D = r^2 - 4nm$, $q = e^{2\pi i \tau}$, $\zeta = e^{2\pi i z}$. By the transformation property of harmonic Maass-Jacobi forms \cite[Definition 3]{BR}, one can deduce that if $r' \equiv r \bmod 2m$ and $D' = D$ with $D' := r'^2 - 4n'm$, then
\[ c^{\pm}(n', r') = c^{\pm}(n, r), \quad \Gamma \left( \frac{3}{2} - k, \frac{\pi D' y}{m} \right) = \Gamma \left( \frac{3}{2} - k, \frac{\pi D y}{m} \right). \]
Hence, we can decompose $\phi(\tau, z)$ by a linear combination of the theta functions as
\[ \phi(\tau, z) = \sum_{\mu \in \mathbb Z / 2m \mathbb Z} h_\mu (\tau) \theta_{m, \mu} (\tau, z), \]
where
\[ h_\mu (\tau) := \sum_{N \gg -\infty} c^+ \left( \frac{N + r^2}{4m}, r \right) q^{N/4m} + \sum_{N < 0} c^- \left( \frac{N + r^2}{4m}, r \right) \Gamma \left( \frac{3}{2} - k, -\frac{\pi N y}{m} \right) q^{N/4m} \]
with any $r \in \mathbb Z$, $r \equiv \mu \bmod 2m$, and
\[ \theta_{m, \mu} (\tau, z) := \sum_{r \in \mathbb Z \atop r \equiv \mu \bmod 2m} q^{r^2/4m} \zeta^r. \]
Using the same argument in \cite[Theorem 5.1]{EZ}, the $2m$-tuples
$(h_\mu)_{\mu \, (2m)}$ satisfies the desired transformation
formula for vector valued harmonic weak Maass forms. Now the
remaining thing is to check $\Delta_{k - \frac{1}{2}} h_\mu = 0$.
By definition, $\phi(\tau, z)$ vanishes under the action of the
Casimir element $C^{k, m}$ (see \cite[p. 2305]{BR}), and the
action of $C^{k, m}$ on functions in $\hat{\mathbb J}_{k, m}^{cusp}$
agrees with that of
\[ C^{k, m} = -2 \Delta_{k - \frac{1}{2}} + \frac{(\tau - \bar{\tau})^2}{4\pi i m}\partial_{\bar{\tau}zz} \]
(see \cite[Proof of Lemma 1]{BR}). Using the fact that $\theta_{m,\mu}(\tau,z)$ is in the heat kernel, that is,
\[ \left( \partial_\tau  - \frac{1}{8 \pi i m} \partial_{zz} \right) \left( \theta_{m, \mu} \right) = 0, \]
one can conclude by a direct computation that
\[ C^{k, m} (\phi) = 0 \Longrightarrow \Delta_{k - \frac{1}{2}} \left( h_\mu \right) = 0 \] for all $\mu \in \mathbb Z / 2m \mathbb Z$. In conclusion,
\[ \hat{\mathbb J}_{k, m}^{cusp} \simeq H_{k - \frac{1}{2}, \bar{\rho}_L}. \]
Hence, we get the following theorem:

\begin{theorem}\label{jacobi}
Let $k$ be even, and $m = 1$ or a prime. Then
\[ \hat{\mathbb J}_{k, m}^{cusp} \simeq H_{k - \frac{1}{2}}^{+}(\Gamma_0(4m)). \]
\end{theorem}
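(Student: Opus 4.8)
The plan is to derive Theorem \ref{jacobi} by composing two isomorphisms, so that no new analytic input is needed. The first, $\hat{\mathbb J}_{k,m}^{cusp}\simeq H_{k-\frac{1}{2},\bar{\rho}_L}$, is exactly what the discussion preceding the statement establishes: given a harmonic Maass--Jacobi form $\phi(\tau,z)$, the transformation law \cite[Definition 3]{BR} forces its Fourier coefficients to depend only on $r\bmod 2m$ and on $D$, so one may write $\phi=\sum_{\mu\,(2m)}h_\mu\,\theta_{m,\mu}$; the theta-decomposition argument of \cite[Theorem 5.1]{EZ} shows the tuple $(h_\mu)_{\mu\,(2m)}$ satisfies the $\bar{\rho}_L$-transformation law for vector-valued forms; and the vanishing of the Casimir operator $C^{k,m}$ on $\phi$, combined with the heat equation $\bigl(\partial_\tau-\frac{1}{8\pi i m}\partial_{zz}\bigr)\theta_{m,\mu}=0$, yields $\Delta_{k-\frac{1}{2}}h_\mu=0$ for every $\mu$. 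I will take this isomorphism as already established.

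For the second isomorphism I would invoke Theorem \ref{Theorem - isomorphism for half integral weight case}(2). Since $k$ is even, $k-1$ is odd; writing the weight as $k-\frac{1}{2}=(k-1)+\frac{1}{2}$, part (2) of that theorem, applied with the odd integer $k-1$ in place of $k$ and with the lattice $L=2m\mathbb Z$, $Q(x)=x^2/4m$ that underlies the Jacobi correspondence, provides an isomorphism of $H_{k-\frac{1}{2}}^{+}(\Gamma_0(4m))$ onto $H_{k-\frac{1}{2},\bar{\rho}_L}$; here the plus condition reads $(-1)^{k-1}n\equiv\Box\bmod 4m$, and the hypothesis $m=1$ or a prime enters precisely at this step. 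Composing with the first isomorphism, and using the explicit inverse $F\mapsto f(\tau)=\sum_\gamma F_\gamma(4m\tau)$ recorded in the Remark, yields $\hat{\mathbb J}_{k,m}^{cusp}\simeq H_{k-\frac{1}{2}}^{+}(\Gamma_0(4m))$, which is the assertion.

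The remaining points are routine compatibility checks rather than a real obstacle. First, one must confirm that the growth conditions match across the three spaces --- the decoration \emph{cusp} on $\hat{\mathbb J}_{k,m}^{cusp}$, the conditions $N\gg-\infty$ (resp.\ $N<0$) on the holomorphic (resp.\ nonholomorphic) part of each $h_\mu$, and the conditions $n\gg-\infty$ on $c_f^+$ and $n<0$ on $c_f^-$ --- so that the two composed maps are genuinely mutually inverse bijections. Second, one must note that the Weil representation attached to the positive definite lattice $2m\mathbb Z$, of signature $(1,0)$ with $b^+-b^-\equiv 1\bmod 8$, coincides with the one in Theorem \ref{Theorem - isomorphism for half integral weight case}, since it depends only on the discriminant form $(\mathbb Z/2m\mathbb Z,\,x^2/4m)$ together with the signature modulo $8$. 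All the genuine content --- the properties of the Weil representation behind Theorem \ref{Theorem - isomorphism for half integral weight case}(2), and the Casimir computation behind $\hat{\mathbb J}_{k,m}^{cusp}\simeq H_{k-\frac{1}{2},\bar{\rho}_L}$ --- has already been carried out, so Theorem \ref{jacobi} follows formally.
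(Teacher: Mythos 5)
Your proposal is correct and follows essentially the same route as the paper: the isomorphism $\hat{\mathbb J}_{k,m}^{cusp}\simeq H_{k-\frac{1}{2},\bar{\rho}_L}$ is exactly the theta-decomposition and Casimir argument the paper gives immediately before the statement, and the second step is the paper's intended application of Theorem \ref{Theorem - isomorphism for half integral weight case}(2) with the odd integer $k-1$ in place of $k$. Your explicit bookkeeping of the weight shift and of the plus-space condition $(-1)^{k-1}n\equiv\Box\bmod 4m$ is a correct and slightly more careful rendering of what the paper leaves implicit.
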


\begin{remark}
(1) In the case of $k$ odd, $M_{k - \frac{1}{2}}^{+}(\Gamma_0(4m))$ is
isomorphic to the space of skew holomorphic Jacobi forms of weight
 $k$ and index $m$. So one can guess that there must be
 a similar isomorphism as above when $k$ is odd. To do that,
 we need to introduce a new definition of Maass Jacobi forms which include skew holomorphic Jacobi forms.
 We hope this can be done by following and modifying the work of Bringmann and Richter \cite{BR}.

(2) In the argument of the proof, the growth condition doesn't matter. Namely, if we redefine $H_{k - \frac{1}{2}, \bar{\rho}_L}$ and $H_{k - \frac{1}{2}}^{+}(\Gamma_0(4m))$ so that they have at most linear exponential growth at cusps, then
\[ \hat{\mathbb J}_{k, m} \simeq H_{k - \frac{1}{2}, \bar{\rho}_L} \simeq H_{k - \frac{1}{2}}^{+}(\Gamma_0(4m)) \]
for $k$ even and $m = 1$ or a prime. Here, $\hat{\mathbb J}_{k, m}$ is the corresponding bigger space (see Section 2.3).
\end{remark}

\section{\bf{Preliminaries}}

\subsection{Scalar valued modular forms}

Let $\tau = x + iy \in \mathbb H$, the complex upper half plane,
with $x, y \in \mathbb R$. Let $k \in \frac{1}{2}\mathbb Z - \mathbb
Z$, and $m$ a positive integer. Put $\varepsilon_d :=
(\frac{-1}{d})^{\frac{1}{2}}$.

Recall that \textit{weakly holomorphic modular forms of weight $k$
for $\Gamma_0(4m)$} are holomorphic functions $f : \mathbb H
\rightarrow \mathbb C$ which satisfy:
\begin{enumerate}[(i)]
\item For all $(\begin{smallmatrix} a & b \\ c & d
\end{smallmatrix}) \in \Gamma_0(4m)$ we have
\[ f\Big(\frac{a\tau + b}{c\tau + d}\Big) = \Big( \frac{c}{d} \Big)
\varepsilon_d^{-2k} (c\tau + d)^k f(\tau); \]

\item $f$ has a Fourier expansion of the form
\[ f(\tau) = \sum_{n \in \mathbb Z \atop n \gg -\infty} c_f(n) q^n,
\] and analogous conditions are required at all cusps.
\end{enumerate}

A smooth function $f : \mathbb H \rightarrow \mathbb C$ is called
a \textit{harmonic weak Maass form of weight $k$ for
$\Gamma_0(4m)$} if it satisfies:
\begin{enumerate}[(i)]
\item For all $(\begin{smallmatrix} a & b \\ c & d
\end{smallmatrix}) \in \Gamma_0(4m)$ we have
\[ f\Big(\frac{a\tau + b}{c\tau + d}\Big) = \Big( \frac{c}{d} \Big)
\varepsilon_d^{-2k} (c\tau + d)^k f(\tau); \]

\item $\Delta_k f = 0$, where $\Delta_k$ is the weight
$k$ hyperbolic Laplace operator defined by
\[ \Delta_k := -y^2 \Big( \frac{\partial^2}{\partial x^2} +
\frac{\partial^2}{\partial y^2} \Big) + i k y \Big(
\frac{\partial}{\partial x} + i\frac{\partial}{\partial y} \Big);
\]

\item There is a Fourier polynomial $P_f(\tau) = \sum_{-\infty \ll
n \leq 0} c_f^+(n) q^n \in \mathbb C[q^{-1}]$ such that $f(\tau) =
P_f(\tau) + O(e^{-\varepsilon y})$ as $y \rightarrow \infty$ for
some $\varepsilon > 0$. Analogous conditions are required at all
cusps.
\end{enumerate}
We denote the space of these harmonic weak Maass forms by
$H_k(\Gamma_0(4m))$. This space can be denoted by
$H_k^+(\Gamma_0(4m))$ in the context of \cite{BF}, which is the inverse image of $S_{2 - k}(\Gamma_0(4m))$ under the certain differential operator $\xi_k$. We have
$M_k^!(\Gamma_0(4m)) \subset H_k(\Gamma_0(4m))$. The polynomial
$P_f \in \mathbb C[q^{-1}]$ is called the principal part of $f$ at
the corresponding cusps.


\subsection{Vector valued modular forms}

We write $\Mp_2(\mathbb R)$ for the metaplectic two-fold cover of
$\SL_2(\mathbb R)$. The elements are pairs $(M, \phi)$, where $M =
(\begin{smallmatrix} a & b \\ c & d
\end{smallmatrix}) \in \SL_2(\mathbb R)$ and $\phi : \mathbb H
\rightarrow \mathbb C$ is a holomorphic function with $\phi(\tau)^2
= c\tau + d$. The multiplication is defined by
\[ (M, \phi(\tau)) (M', \phi'(\tau)) = (MM', \phi(M'\tau)
\phi'(\tau)). \] For $M = (\begin{smallmatrix} a & b \\ c & d
\end{smallmatrix}) \in \SL_2(\mathbb R)$ we use the notation
$\tilde{M} := ((\begin{smallmatrix} a & b \\ c & d
\end{smallmatrix}), \sqrt{c\tau + d}) \in \Mp_2(\mathbb R)$. We
denote by $\Mp_2(\mathbb Z)$ the integral metaplectic group, that is
the inverse image of $\SL_2(\mathbb Z)$ under the covering map
$\Mp_2(\mathbb R) \rightarrow \SL_2(\mathbb R)$. It is well known
that $\Mp_2(\mathbb Z)$ is generated by $T := \left(
(\begin{smallmatrix} 1 & 1 \\ 0 & 1 \end{smallmatrix}), 1 \right)$
and $S := \left( (\begin{smallmatrix} 0 & -1 \\ 1 & 0
\end{smallmatrix}), \sqrt{\tau} \right)$.

Let $(V, Q)$ be a non-degenerate rational quadratic space of
signature $(b^+, b^-)$. Let $L \subset V$ be an even lattice with
dual $L'$. We denote the standard basis elements of the group
algebra $\mathbb C[L' / L]$ by $\mathfrak e_\gamma$ for $\gamma
\in L'/L$, and write $\langle \cdot, \cdot \rangle$ for the
standard scalar product, anti-linear in the second entry, such
that $\langle \mathfrak e_\gamma, \mathfrak e_{\gamma'} \rangle =
\delta_{\gamma, \gamma'}$. There is a unitary representation
$\rho_L$ of $\Mp_2(\mathbb Z)$ on $\mathbb C[L'/L]$, the so-called
Weil representation, which is defined by
\begin{eqnarray*}
\rho_L(T) (\mathfrak e_\gamma) & := & e(Q(\gamma)) \mathfrak
e_\gamma, \\
\rho_L(S) (\mathfrak e_\gamma) & := & \frac{e((b^- -
b^+)/8)}{\sqrt{|L'/L|}} \sum_{\delta \in L'/L} e(-(\gamma, \delta))
\mathfrak e_\delta,
\end{eqnarray*}
where $e(z) := e^{2 \pi i z}$ and $(X, Y) := Q(X + Y) - Q(X) - Q(Y)$
is the associated bilinear form. We denote by $\bar{\rho}_L$ the
dual representation of $\rho_L$.

Let $k \in \frac{1}{2}\mathbb Z$. A holomorphic function $f :
\mathbb H \rightarrow \mathbb C[L'/L]$ is called a \textit{weakly
holomorphic modular form of weight $k$ and type $\rho_L$ for the
group $\Mp_2(\mathbb Z)$} if it satisfies:
\begin{enumerate}[(i)]
\item $f(M\tau) = \phi(\tau)^{2k} \rho_L(M, \phi) f(\tau)$ for all
$(M, \phi) \in \Mp_2(\mathbb Z)$;

\item $f$ is meromorphic at the cusp $\infty$.
\end{enumerate}
Here condition (ii) means that $f$ has a Fourier expansion of the
form
\[ f(\tau) = \sum_{\gamma \in L'/L} \sum_{n \in \mathbb Z + Q(\gamma)
\atop n \gg -\infty} c_f(\gamma, n) e(n\tau) \mathfrak e_\gamma. \]
The space of these $\mathbb C[L'/L]$-valued weakly holomorphic
modular forms is denoted by $M_{k, \rho_L}^!$. Similarly we can
define the space $M_{k, \bar{\rho}_L}^!$ of $\mathbb C[L'/L]$-valued
weakly holomorphic modular forms of type $\bar{\rho}_L$.

A smooth function $f : \mathbb H \rightarrow \mathbb C[L'/L]$ is
called a \textit{harmonic weak Maass form of weight $k$ and type
$\rho_L$ for the group $\Mp_2(\mathbb Z)$} if it satisfies:
\begin{enumerate}[(i)]
\item $f(M\tau) = \phi(\tau)^{2k} \rho_L(M, \phi) f(\tau)$ for all
$(M, \phi) \in \Mp_2(\mathbb Z)$;

\item $\Delta_k f = 0$;

\item There is a Fourier polynomial $P_f(\tau) = \sum_{\gamma \in L'/L}
\sum_{n \in \mathbb Z + Q(\gamma) \atop -\infty \ll n \leq 0}
c_f^+(\gamma, n) e(n\tau) \mathfrak e_\gamma$ such that $f(\tau) =
P_f(\tau) + O(e^{-\varepsilon y})$ as $y \rightarrow \infty$ for
some $\varepsilon > 0$.
\end{enumerate}
We denote by $H_{k, \rho_L}$ the space of these $\mathbb
C[L'/L]$-valued harmonic weak Maass forms. This space is denoted by
$H_{k, L}^+$ in \cite{BF}, which is the inverse image of $S_{2 - k, L^{-}}$ under $\xi_k$. We have $M_{k, \rho_L}^! \subset H_{k, \rho_L}$. Similarly we define the space $H_{k, \bar{\rho}_L}$. In
particular $f \in H_{k, \rho_L}$ has a unique decomposition $f = f^+
+ f^-$, where

\begin{eqnarray*}
f^+(\tau) & = & \sum_{\gamma \in L'/L} \sum_{n \in \mathbb Z + Q(\gamma)
\atop n \gg -\infty} c_f^+(\gamma, n) e(n\tau) \mathfrak e_\gamma, \\
f^-(\tau) & = & \sum_{\gamma \in L'/L} \sum_{n \in \mathbb Z +
Q(\gamma) \atop n < 0} c_f^-(\gamma, n) \Gamma(1 - k, 4\pi |n| y)
e(n\tau) \mathfrak e_\gamma.
\end{eqnarray*}

\subsection{Harmonic Maass-Jacobi forms} The most part of this
session we follow the notation given in \cite{BR}.

\begin{definition} A function $\phi:\mathbb{H} \times \mathbb{C}
\rightarrow \mathbb{C}$ is a harmonic Maass-Jacobi form of weight
$k$ and index $m$ if $\phi$ is real-analytic in
$\tau\in\mathbb{H}$ and $z\in \mathbb{C}$, and satisfies the
following conditions:

\begin{enumerate}
\item For all $A=[\sm a & b\\c& d\esm, (\lambda, \mu)]\in
\mathrm{SL}_2(\mathbb{Z})\ltimes \mathbb{Z}^2$
$$\phi\left(\frac{a\tau+b}{c\tau+d},
\frac{z+\lambda\tau+\mu}{c\tau+d}\right)(c\tau+d)^{-k}e^{2\pi i
m(-\frac{c(z+\lambda\tau+\mu)^2}{c\tau+d}+\lambda^2\tau+2\lambda
z)}=\phi(\tau,z).$$ \item $ C^{k,m}(\phi)=0,$  where $C^{k,m}$ is
the Casimir element of the real Jacobi group (see p. 2305 in
\cite{BR}).

\item $\phi(\tau,z)=O (e^{ay}e^{2\pi m v^2/y} )$ as
$y\rightarrow \infty$ for some $a>0$.

\end{enumerate}

\end{definition}

Let $\widehat{\mathbb{J}}_{k,m}$ be the space of harmonic Maass-Jacobi forms of weight $k$ and index $m$, which are holomorphic in $z$. In fact, we are interested in the subspace $\widehat{{\mathbb{J}}}_{k,m}^{cusp}$ consisting of the elements $\phi \in \widehat{\mathbb{J}}_{k,m}$ whose Fourier expansion is of the form
\[ \phi(\tau, z) = \sum_{n, r \in \mathbb Z \atop D \ll \infty} c^+(n, r) q^n \zeta^r + \sum_{n, r \in \mathbb Z \atop D > 0} c^-(n, r) \Gamma \left( \frac{3}{2} - k, \frac{\pi D y}{m} \right) q^n \zeta^r. \]
The space $\widehat{{\mathbb{J}}}_{k,m}^{cusp}$ is in fact the inverse image of $J_{3 - k, m}^{sk, cusp}$ under the certain differential operator $\xi_{k, m}$ (see \cite[Remarks (1) on p. 2307]{BR}).

\section{\bf{Proof of Theorem \ref{Theorem - isomorphism for half integral weight
case}}}

We will only give a proof of (1) because exactly the same argument
can be applied. We first prove that for a given $f \in
H_{k+\frac{1}{2}}^{+}(\Gamma_0(4m))$ the $\mathbb C[L'/L]$-valued
function $F$ as defined in Section 1 belongs to $H_{k+\frac{1}{2},
\rho_L}$. One has that $f(\tau) = \sum_{\gamma\in L'/L}
F_\gamma(4m\tau)$ by inspecting the Fourier expansion of $f$.
Since it is straightforward to check $F(\tau + 1) = \rho_L(T)
F(\tau)$, we show that
\begin{eqnarray}\label{Equation - rho(S)}
F \Big( -\frac{1}{\tau} \Big) = \tau^{k+\frac{1}{2}} \rho_L(S)
F(\tau).
\end{eqnarray}

In \cite{Kim} Kim proved (\thesection.\ref{Equation - rho(S)}) for
$m \in \mathfrak S$, where
\[ \mathfrak S = \{ 2, 3, 5, 7, 11, 13, 17, 19, 23, 29, 31, 41, 47,
59, 71 \}. \] We will prove (\thesection.\ref{Equation - rho(S)})
for $m = 1$ or a prime by following his argument. For details we
refer to \cite[pp. 735-737]{Kim}. For $j$ prime to $4m$ put
\[ f_j := f |_{k+\frac{1}{2}} \big( \big( \begin{smallmatrix} 1 & j \\ 0 & 4m
\end{smallmatrix} \big), (4m)^{1/4} \big) |_{k+\frac{1}{2}} W_{4m}, \] where
\[ W_{4m} := \big( \big( \begin{smallmatrix} 0 & -1 \\ 4m & 0
\end{smallmatrix} \big), (4m)^{1/4} \sqrt{-i\tau} \big). \]

We choose $b, d \in \mathbb Z$ so that $jd - 4mb = 1$. Then one
finds that
\begin{eqnarray*}
\big( \big( \begin{smallmatrix} 1 & j \\ 0 & 4m \end{smallmatrix}
\big), (4m)^{1/4} \big) W_{4m} & = & \big( \big( \begin{smallmatrix}
4mj & -1 \\ 16m^2 & 0 \end{smallmatrix} \big), 2 \sqrt{-mi\tau}
\big) \\
& = & \big( M, J(M, \tau) \big) \big( \big( \begin{smallmatrix} 4m
& -d \\ 0 & 4m \end{smallmatrix} \big), \psi_j \big),
\end{eqnarray*}
where
\[ \psi_j := \Big( \frac{4m}{j} \Big) \sqrt{\Big( \frac{-1}{j} \Big)}
e(-1/8) \] and $M = \big( \begin{smallmatrix} j & b \\ 4m & d
\end{smallmatrix} \big)$ and $J(M, \tau)$ denotes the automorphy factor
for the theta function $\sum_{n \in \mathbb Z} q^{n^2}$, that is,
$$J(M,\tau):=\Big(\frac{c}{d}\Big)\varepsilon_d^{-1}(c\tau+d)^{\frac{1}{2}},
\quad M=\sm * & * \\ c & d \esm \in \Gamma_0(4m).$$ Here
$(\frac{\cdot}{\cdot})$ is the usual Jacobi symbol.

This implies that
\begin{eqnarray}\label{Equation - 1st description of fj}
f_j & = & f |_{k + \frac{1}{2}} \big( \big( \begin{smallmatrix} 1 &
j \\ 0 & 4m \end{smallmatrix} \big), (4m)^{1/4} \big) W_{4m} \nonumber \\
& = & \psi_j^{-2k - 1} f \Big( \tau - \frac{j^{-1}}{4m} \Big) \nonumber \\
& = & \psi_j^{-2k - 1} \sum_{\gamma (2m)} e \Big( -\frac{j^{-1}
\gamma^2}{4m} \Big) F_\gamma(4m\tau),
\end{eqnarray}
where $j^{-1}$ denotes an integer which is the inverse of $j$ in
$(\mathbb Z / 4m \mathbb Z)^\times$. On the other hand we have by
definition that
\begin{eqnarray}\label{Equation - 2nd description of fj}
f_j & = & (4m)^{(-2k - 1)/4} \Big( \sum_{\gamma (2m)} e \Big(
\frac{j\gamma^2}{4m} \Big) F_\gamma \Big) \Big|_{k + \frac{1}{2}} W_{4m} \nonumber \\
& = & (4m)^{(-2k - 1)/2} \sqrt{-i\tau}^{-2k - 1} \sum_{\gamma (2m)}
e \Big( \frac{j\gamma^2}{4m} \Big) F_\gamma \Big( -\frac{1}{4m\tau}
\Big).
\end{eqnarray}
Replacing $\tau$ by $\tau/4m$ in (\thesection.\ref{Equation - 1st
description of fj}) and (\thesection.\ref{Equation - 2nd
description of fj}) one has the following identity:
\begin{eqnarray}\label{Equation - identity of 1st and 2nd description of fj}
\sum_{\gamma (2m)} e \Big( \frac{j\gamma^2}{4m} \Big) F_\gamma \Big(
-\frac{1}{\tau} \Big) = \Big( \frac{4m}{j} \Big) \sqrt{\Big(
\frac{-1}{j} \Big)}^{-1} \tau^{k+\frac{1}{2}} \sum_{\gamma (2m)} e
\Big( -\frac{j^{-1}\gamma^2}{4m} \Big) F_\gamma(\tau).
\end{eqnarray}

Let $R$ be a $2m \times 2m$ matrix defined by
\[ R := \frac{e(-1/8)}{\sqrt{2m}} \left( e \Big( -\frac{l\gamma}{2m}
\Big) \right)_{l (2m), \, \gamma (2m)}. \] In order to show $F \in
H_{k+\frac{1}{2}, \rho_L}$ we first need to prove
(\thesection.\ref{Equation - rho(S)}), i.e.
\[ \begin{pmatrix} \vdots \\ F_\gamma \\ \vdots \end{pmatrix} (
-1/\tau ) = \tau^{k+\frac{1}{2}} R \begin{pmatrix} \vdots \\
F_\gamma \\ \vdots \end{pmatrix} (\tau). \] Since $F_\gamma =
F_{-\gamma}$ the above identity is equivalent to
\[ B \begin{pmatrix} \vdots \\ F_\gamma \\ \vdots \end{pmatrix} (
-1/\tau ) = \tau^{k+\frac{1}{2}} BR \begin{pmatrix} \vdots \\
F_\gamma \\ \vdots \end{pmatrix} (\tau) \] for some matrix $B$
with $2m$ columns of which the first $m + 1$ ones are linearly
independent. For instance, in \cite{Kim} $B$ was chosen as
\[ A := \left( e \Big( \frac{\j_{\ell}\gamma^2}{4m} \Big)
\right)_{\ell(\varphi(4m)), \, \gamma (2m)} \] and checked that its
rank is $m + 1$ if $m \in \mathfrak S - \{ 2 \}$. Here $\j_{\ell}$
is the $\ell$th largest element in $\{ j \, | \, 1\leq j\leq 4m,
(j,4m)=1\}.$

In this paper we take $B$ as $CA$ where
\[ C := \left( e \Big( -\frac{\j_{\ell}\beta^2}{4m} \Big) \right)_{\beta
(2m), \, \ell (\varphi(4m))}. \]

\begin{lemma}\label{Lemma - rank of B}
The rank of $B := CA$ is $2 \varphi(m)$, and the first $2
\varphi(m)$ columns of $B$ are linearly independent.
\end{lemma}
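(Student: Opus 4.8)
The plan is to compute the matrix $B = CA$ explicitly and reduce the rank computation to a statement about Gauss sums. First I would write out the $(\beta,\gamma)$-entry of $B$: it is
\[
B_{\beta\gamma} = \sum_{\ell (\varphi(4m))} e\Big( -\frac{\j_\ell \beta^2}{4m} \Big) e\Big( \frac{\j_\ell \gamma^2}{4m} \Big) = \sum_{j \in (\mathbb Z/4m\mathbb Z)^\times} e\Big( \frac{j(\gamma^2 - \beta^2)}{4m} \Big).
\]
This is a Ramanujan-type sum $c_{4m}(\gamma^2 - \beta^2)$, which depends only on $\gcd(\gamma^2 - \beta^2, 4m)$ and can be evaluated by the standard formula $c_n(a) = \sum_{d \mid \gcd(n,a)} d\,\mu(n/d)$. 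Since $m = 1$ or a prime, $4m$ has very few divisors ($1, 2, 4, m, 2m, 4m$), so each entry takes one of only a handful of explicit values. I would tabulate these, distinguishing the prime cases $m$ odd, $m = 2$, and $m = 1$ separately if needed.

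Next I would exploit the symmetry $F_\gamma = F_{-\gamma}$, already used in the text, together with the fact that $\gamma \mapsto \gamma^2 \bmod 4m$ is what actually enters: the columns of $B$ indexed by $\gamma$ and $\gamma'$ coincide whenever $\gamma^2 \equiv \gamma'^2 \bmod 4m$. For $m = 1$ or a prime the squares $\gamma^2 \bmod 4m$ for $\gamma \in \mathbb Z/2m\mathbb Z$ take exactly $2\varphi(m) + \text{(small number)}$... more precisely one counts the distinct values of $\gamma^2 \bmod 4m$ and checks it matches the claimed rank. I expect that after collapsing repeated columns, the remaining distinct columns — and in particular the first $2\varphi(m)$ of them, corresponding to $\gamma$ with $\gamma$ coprime to $2m$ in an appropriate range — are linearly independent because the associated Gauss/Ramanujan sums form a matrix whose determinant is a nonzero product of prime powers. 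Concretely I would extract the $2\varphi(m) \times 2\varphi(m)$ submatrix on these columns and an appropriate set of rows, and show it is (up to units) a Vandermonde-like or circulant-like matrix in the values $e(j \cdot /4m)$, hence invertible; alternatively one shows its Gram matrix $B^*B$ is nonsingular by the orthogonality relations for additive characters mod $4m$.

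The main obstacle I anticipate is the bookkeeping when $m$ is a small prime (especially $m = 2$, which is why $\mathfrak S - \{2\}$ was singled out in Kim's argument): the multiplier $C$ was introduced precisely to repair the rank deficiency of $A$ alone in that case, and verifying that $B = CA$ genuinely has full rank $2\varphi(m)$ there — rather than inheriting $A$'s defect — requires checking that the extra averaging over $\beta$ does not introduce new linear dependencies among columns. I would handle $m = 1$ by direct inspection (here $\varphi(m) = 1$ and $B$ is a tiny explicit matrix), and for $m$ prime I would argue uniformly using the character-sum evaluation above, treating $m = 2$ as the one case where the entries for $\gcd$-values $2$ and $4$ behave differently and must be checked by hand. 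Once the explicit $2\varphi(m) \times 2\varphi(m)$ block is shown to have nonzero determinant, the rank statement and the linear independence of the first $2\varphi(m)$ columns follow at once.
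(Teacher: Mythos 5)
Your opening step is exactly the paper's: the $(\beta,\gamma)$-entry of $B=CA$ is the Ramanujan sum
$c_{4m}(\gamma^2-\beta^2)=\sum_{j\in(\mathbb Z/4m\mathbb Z)^\times}e\left(j(\gamma^2-\beta^2)/4m\right)$,
and the paper likewise evaluates it by cases (obtaining $2\varphi(m)$, $-2$, or $0$) and reads off the rank. The difference is that you defer the decisive verification (``one counts the distinct values of $\gamma^2\bmod 4m$ and checks it matches the claimed rank''), and that is precisely where the argument cannot be completed.

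Indeed, your own (correct) observation that the $\gamma$-column of $B$ depends only on $\gamma^2\bmod 4m$ already bounds $\mathrm{rank}(B)$ by the number of distinct residues $\gamma^2\bmod 4m$ with $\gamma\in\mathbb Z/2m\mathbb Z$. For an odd prime $m$ this number is exactly $m+1$ (only the coincidence $\gamma^2\equiv(-\gamma)^2$ occurs), whereas $2\varphi(m)=2m-2>m+1$ once $m\ge 5$. Concretely, for $m=5$ the columns indexed by $\gamma$ and $10-\gamma$ coincide, so $B$ has at most $6$ distinct columns and $\mathrm{rank}(B)\le 6<8=2\varphi(5)$; the same bound follows from $\mathrm{rank}(CA)\le\mathrm{rank}(A)\le m+1$. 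So the check you postpone would fail, and no Vandermonde or Gram-matrix device can rescue it. Carrying out the $\gcd$ casework you propose makes the source of the discrepancy visible: whenever $\gamma\equiv-\beta\bmod 2m$ with $\gamma\ne\beta$ one has $\gamma^2\equiv\beta^2\bmod 4m$, so $b_{\beta\gamma}=c_{4m}(0)=\varphi(4m)=2\varphi(m)$ rather than the value $-2$ recorded in the paper's table for all off-diagonal same-parity pairs; it is this overcount that produces the rank $2\varphi(m)$ in the paper's proof. What your computation does legitimately yield is the evaluation of the entries and the bound $\mathrm{rank}(B)\le m+1$; the linear-independence statement actually needed downstream (independence of enough columns after identifying $\gamma$ with $-\gamma$) must be established separately, as in Kim's case-by-case treatment of the matrix $A$. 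You should flag this rather than assert that the count ``matches.''
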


\begin{proof}
The $\beta\gamma$-th entry $b_{\beta\gamma}$ of the $2m \times 2m$
matrix $B$ is given by
\begin{eqnarray*}
b_{\beta\gamma} & = & \sum_{\ell (\varphi(4m))} e \Big( \frac{\j_\ell(\gamma^2 -
\beta^2)}{4m} \Big) \\
& = & \left\{
\begin{array}{ll}
2 \varphi(m) & \mbox{if } \beta = \gamma \\
-2 & \mbox{if } \beta \neq \gamma \mbox{ and } \beta \equiv
\gamma \bmod 2 \\
0 & \mbox{otherwise.}
\end{array} \right.
\end{eqnarray*}
From this one can easily infer that $B$ has rank $2 \varphi(m)$,
and its first $2 \varphi(m)$ columns are linearly independent.
\end{proof}

Since
\[ AR = \left( \Big( \frac{4m}{\j_\ell} \Big) \sqrt{\Big( \frac{-1}{\j_\ell}
\Big)}^{-1} e \Big( -\frac{\j_\ell^{-1}\gamma^2}{4m} \Big)
\right)_{\ell (\varphi(4m)), \, \gamma (2m)} \] from the Gauss sum
formula (see \cite[p. 736]{Kim}), the identity
(\thesection.\ref{Equation - identity of 1st and 2nd description of
fj}) is equivalent to
\[ A \begin{pmatrix} \vdots \\ F_\gamma \\ \vdots \end{pmatrix} (
-1/\tau ) = \tau^{k+\frac{1}{2}} AR \begin{pmatrix} \vdots \\
F_\gamma \\ \vdots \end{pmatrix} (\tau). \] This implies that the
identity
\[ B \begin{pmatrix} \vdots \\ F_\gamma \\ \vdots \end{pmatrix} (
-1/\tau ) = \tau^{k+\frac{1}{2}} BR \begin{pmatrix} \vdots \\
F_\gamma \\ \vdots \end{pmatrix} (\tau) \] holds true. Combining
with Lemma \ref{Lemma - rank of B}, $F$ satisfies the
transformation property (\thesection.\ref{Equation - rho(S)}) for
$m \neq 2$. From (\thesection.\ref{Equation - 1st description of
fj}), (\thesection.\ref{Equation - identity of 1st and 2nd
description of fj}), and Lemma \ref{Lemma - rank of B} we can
infer that each $F_\gamma|_{k + \frac{1}{2}}((\begin{smallmatrix}
0 & -1 \\ 1 & 0
\end{smallmatrix}), \sqrt{\tau})$ can be written as a linear combination of
$f_j := f |_{k+\frac{1}{2}} ( ( \begin{smallmatrix} 1 & j \\
0 & 4m \end{smallmatrix} ), (4m)^{1/4} ) |_{k+\frac{1}{2}} W_{4m}$
for all $(j, 4m) = 1$. Since $\Delta_{k + \frac{1}{2}}$ commutes
with the Petersson slash operator (see \cite{Ma}), each $\Delta_{k +
\frac{1}{2}} F_\gamma$ vanishes, i.e. $F \in H_{k + \frac{1}{2},
\rho_L}$ for $m \neq 2$. The same procedure as given in \cite[Remark
3.2]{Kim} can be applied to the case when $m=2$, so we omit the
detailed proof.

Now we consider the converse. For a given $F = \sum_{\gamma (2m)}
F_\gamma \mathfrak e_\gamma \in H_{k+\frac{1}{2}, \rho_L}$ we define
\begin{eqnarray}\label{Equation - inverse isomorphism}
f(\tau) & := & \sum_{\gamma (2m)} F_\gamma (4m\tau).
\end{eqnarray}
It is straightforward to verify that $f$ satisfies the condition
$c_f^\pm(n) = 0$ unless $(-1)^k n \equiv \Box \bmod 4m$ by
inspecting the Fourier expansion of $F$. Also $\Delta_{k +
\frac{1}{2}} f = 0$ by (\thesection.\ref{Equation - inverse
isomorphism}). Thus it suffices to show that
\begin{eqnarray*}
f \Big( \frac{a\tau + b}{c\tau + d} \Big) = \Big( \frac{c}{d} \Big)
\sqrt{ \Big( \frac{-1}{d} \Big) }^{-1} (c\tau + d)^{k+\frac{1}{2}}
f(\tau)
\end{eqnarray*}
for all $(\begin{smallmatrix} a & b \\ c & d \end{smallmatrix}) \in
\Gamma_0(4m)$. We may assume that $d > 0$ by multiplying
$(\begin{smallmatrix} -1 & 0 \\ 0 & -1 \end{smallmatrix})$ if
necessary.

\begin{remark}
In what follows, our results hold for arbitrary $m > 0$. In fact one
may apply our argument even for somewhat general discriminant forms
$(L'/L, Q)$.
\end{remark}

\begin{lemma}\label{Lemma - trivial action by applying Shintani's}
For any $n \in \mathbb Z$ one has
\[ \rho_L \widetilde{ \begin{pmatrix} 1 & 0 \\ n & 1
\end{pmatrix} } \sum_{\gamma \in L'/L} \mathfrak e_\gamma
= \sum_{\gamma \in L'/L} \mathfrak e_\gamma. \]
\end{lemma}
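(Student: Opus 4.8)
The plan is to reduce the claim to the behaviour of $\rho_L$ on the single basis vector $\mathfrak e_0$, using that $\begin{pmatrix}1&0\\n&1\end{pmatrix}$ is an $S$-conjugate of a power of $T$. More precisely, $\sum_{\gamma\in L'/L}\mathfrak e_\gamma$ is a nonzero scalar multiple of $\rho_L(S)\mathfrak e_0$, while $\mathfrak e_0$ is fixed by $\rho_L(T^{-n})$; conjugating this fixing relation by $\rho_L(S)$ produces exactly the desired identity.

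First I would record the two elementary facts at the coset $\gamma=0$. From the definition of $\rho_L$, together with $Q(0)=0$ and $(0,\delta)=Q(\delta)-Q(0)-Q(\delta)=0$ for every $\delta\in L'/L$, one gets
\[ \rho_L(S)\,\mathfrak e_0=\frac{e((b^--b^+)/8)}{\sqrt{|L'/L|}}\sum_{\delta\in L'/L}\mathfrak e_\delta,\qquad \rho_L(T^{-n})\,\mathfrak e_0=\mathfrak e_0\quad(n\in\mathbb Z). \]
Writing $v_0:=\sum_{\gamma\in L'/L}\mathfrak e_\gamma$, this says $\rho_L(S)\mathfrak e_0=c\,v_0$ with $c=e((b^--b^+)/8)/\sqrt{|L'/L|}\neq 0$.

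Next I would prove the metaplectic identity $S\,T^{-n}\,S^{-1}=\widetilde{\begin{pmatrix}1&0\\n&1\end{pmatrix}}$ in $\Mp_2(\mathbb Z)$. The underlying $\SL_2(\mathbb Z)$ relation $\begin{pmatrix}0&-1\\1&0\end{pmatrix}\begin{pmatrix}1&-n\\0&1\end{pmatrix}\begin{pmatrix}0&1\\-1&0\end{pmatrix}=\begin{pmatrix}1&0\\n&1\end{pmatrix}$ is immediate, so the only content is that the automorphy factor of the left side equals $\sqrt{n\tau+1}$ \emph{on the nose}, not $-\sqrt{n\tau+1}$. Using $S^{-1}=\left(\begin{smallmatrix}0&1\\-1&0\end{smallmatrix},\ \sqrt{-\tau}\right)$ and the cocycle rule $(M,\phi)(M',\phi')=(MM',\phi(M'\tau)\phi'(\tau))$, multiplying out yields the automorphy factor $\sqrt{\tfrac{n\tau+1}{-\tau}}\cdot\sqrt{-\tau}$; since $\tfrac{n\tau+1}{-\tau}=-n-\tfrac1\tau$ lies in $\mathbb H$ for \emph{every} $\tau\in\mathbb H$ and \emph{every} $n\in\mathbb Z$, the principal branches satisfy $\sqrt a\,\sqrt b=\sqrt{ab}$ in this case, and the product collapses to $\sqrt{n\tau+1}$ with no stray sign. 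This cocycle bookkeeping is the one genuinely delicate point, hence the expected main obstacle: one must confirm the lift is $\widetilde{\begin{pmatrix}1&0\\n&1\end{pmatrix}}$ itself rather than its product with the nontrivial element of $\ker(\Mp_2(\mathbb Z)\to\SL_2(\mathbb Z))$ — on which $\rho_L$ in general acts by $-1$ — and that this holds uniformly in $n\in\mathbb Z$, negative $n$ included.

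Finally I would combine: since $\rho_L$ is a representation,
\[ \rho_L\!\left(\widetilde{\begin{pmatrix}1&0\\n&1\end{pmatrix}}\right)(c\,v_0)=\rho_L(S)\,\rho_L(T^{-n})\,\rho_L(S^{-1})\,\rho_L(S)\,\mathfrak e_0=\rho_L(S)\,\rho_L(T^{-n})\,\mathfrak e_0=\rho_L(S)\,\mathfrak e_0=c\,v_0, \]
and dividing by $c$ gives $\rho_L\,\widetilde{\begin{pmatrix}1&0\\n&1\end{pmatrix}}\,v_0=v_0$, as asserted. The argument uses nothing special about $(L'/L,Q)$, in accordance with the remark preceding the lemma. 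As an alternative one could read the same identity off Shintani's closed formula for $\rho_L\!\left(\begin{smallmatrix}1&0\\c&1\end{smallmatrix}\right)$ by summing the resulting Gauss sums over $\gamma$ and invoking orthogonality, but the conjugation route above is shorter and keeps all the square-root care confined to a single step.
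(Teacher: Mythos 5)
Your proof is correct, but it follows a genuinely different route from the paper's. The paper proves the lemma by brute force on matrix coefficients: it reduces to $n=1$, invokes Shintani's explicit formula $\rho_{\beta\gamma}\widetilde{\left(\begin{smallmatrix}1&0\\1&1\end{smallmatrix}\right)}=\frac{e((b^--b^+)/8)}{\sqrt{2m}}e(Q(\beta)-(\beta,\gamma)+Q(\gamma))$, sums over $\gamma$ after recognizing the exponent as $Q(\beta-\gamma)$, and evaluates the resulting Gauss sum by Milgram's formula. Your argument instead conjugates the trivial relation $\rho_L(T^{-n})\mathfrak e_0=\mathfrak e_0$ by $\rho_L(S)$, using that $\sum_\gamma\mathfrak e_\gamma$ is a nonzero multiple of $\rho_L(S)\mathfrak e_0$; this needs only the two defining formulas of $\rho_L$ and avoids both Shintani's formula and Milgram's formula, so it is more self-contained and makes the independence from the particular discriminant form (noted in the remark preceding the lemma) transparent. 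You correctly isolate the one delicate point — that $S\,T^{-n}S^{-1}$ equals $\widetilde{\left(\begin{smallmatrix}1&0\\n&1\end{smallmatrix}\right)}$ itself and not its product with $(I,-1)$ — and your branch bookkeeping checks out; I would only tighten the justification of $\sqrt a\,\sqrt b=\sqrt{ab}$: it is not merely that $a=\frac{n\tau+1}{-\tau}\in\mathbb H$, but that $\arg a\in(0,\pi)$ while $\arg(-\tau)\in(-\pi,0)$, so the sum of arguments lies in $(-\pi,\pi)$ and no sign is lost. The trade-off is that the paper's computation, while heavier, exhibits the row sums of the matrix of $\rho_L\widetilde{\left(\begin{smallmatrix}1&0\\1&1\end{smallmatrix}\right)}$ explicitly, which is in the same spirit as the Borcherds-type coefficient formulas used immediately afterwards in the proof of the main theorem.
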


\begin{proof}
Since $\widetilde{ ( \begin{smallmatrix} 1 & 0 \\ n & 1
\end{smallmatrix} ) } = \widetilde{ ( \begin{smallmatrix} 1 & 0 \\ 1 & 1
\end{smallmatrix} ) }^n$ it suffices to prove that
\[ \rho_L \widetilde{ \begin{pmatrix} 1 & 0 \\ 1 & 1
\end{pmatrix} } \sum_{\gamma \in L'/L} \mathfrak e_\gamma
= \sum_{\gamma \in L'/L} \mathfrak e_\gamma. \] For $M \in
\SL_2(\mathbb Z)$ we define the coefficients $\rho_{\beta \gamma}
(\tilde{M})$ of the representation $\rho_L$ by $\rho_{\beta
\gamma}(\tilde{M}) = \langle \rho_L(\tilde{M}) \mathfrak e_\gamma,
\mathfrak e_\beta \rangle$. From Shintani's result
\cite[Proposition 1.1]{Br} for $\rho_{\beta \gamma}(\tilde{M})$
 one has
\begin{eqnarray*}
\sum_{\gamma \in L'/L} \rho_{\beta\gamma} \widetilde{
\begin{pmatrix} 1 & 0 \\ 1 & 1 \end{pmatrix} } & = &
\sum_{\gamma \in L'/L} \frac{e((b^- - b^+)/8)}{\sqrt{2m}} e(Q(\beta)
- (\beta, \gamma) + Q(\gamma)) \\
& = & \frac{e((b^- - b^+)/8)}{\sqrt{2m}} \sum_{\gamma \in L'/L}
e(Q(\beta - \gamma)) \\
& = & 1   .
\end{eqnarray*}
The last equality is from Milgram's formula (see \cite{Bo}), that
is,
\[\sum_{\gamma\in
L'/L}e(Q(\gamma))=\sqrt{|L'/L|}e((b^+ - b^-)/8). \]
\end{proof}

First notice that
\[ f(\tau) = \langle F(4m\tau), \sum_{\gamma\in L'/L} \mathfrak e_\gamma
\rangle. \] Since $( \begin{smallmatrix} 4m & 0 \\ 0 & 1
\end{smallmatrix} ) ( \begin{smallmatrix} a & b \\ c & d
\end{smallmatrix} ) = ( \begin{smallmatrix} a & 4mb \\ c/4m &
d \end{smallmatrix} ) ( \begin{smallmatrix} 4m & 0 \\ 0 & 1
\end{smallmatrix} )$ we get for $( \begin{smallmatrix} a & b \\ c &
d \end{smallmatrix} ) \in \Gamma_0(4m)$ that
\begin{eqnarray}\label{Equation - via the standard scalar product}
f \Big( \frac{a\tau + b}{c\tau + d} \Big) & = & \Big\langle F
\big( \big( \begin{smallmatrix} a & 4mb \\ c/4m & d
\end{smallmatrix} \big) (4m\tau) \big), \sum_{\gamma\in L'/L}
\mathfrak e_\gamma \Big\rangle
\nonumber \\
& = & \Big\langle (c\tau + d)^{k+\frac{1}{2}} \rho_L \widetilde{
\big(
\begin{smallmatrix} a & 4mb \\ c/4m & d \end{smallmatrix} \big) }
F(4m\tau), \sum_{\gamma\in L'/L} \mathfrak e_\gamma \Big\rangle \nonumber \\
& = & (c\tau + d)^{k+\frac{1}{2}} \Big\langle F(4m\tau), \rho_L
\widetilde{ \big( \begin{smallmatrix} a & 4mb \\ c/4m & d
\end{smallmatrix} \big) }^{-1} \sum_{\gamma\in L'/L} \mathfrak e_\gamma
\Big\rangle.
\end{eqnarray}
Observe that $\Gamma^0(4m) = \langle \Gamma_0(4m) \cap \Gamma^0(4m),
( \begin{smallmatrix} 1 & 0 \\ 1 & 1 \end{smallmatrix} ) \rangle$.
More precisely one has
\[ \big( \begin{smallmatrix} a & 4mb \\ c/4m & d \end{smallmatrix}
\big) = \big( \begin{smallmatrix} a(1 - bc) & 4mb \\ -4mb(c/4m)^2
& d \end{smallmatrix} \big) \big( \begin{smallmatrix} 1 & 0 \\
ac/4m & 1 \end{smallmatrix} \big). \] We first consider the case
$a > 0$. The consistency condition implies that
\[ \widetilde{ \big( \begin{smallmatrix} a & 4mb \\ c/4m & d
\end{smallmatrix} \big) } = \widetilde{ \big( \begin{smallmatrix}
a(1 - bc) & 4mb \\ -4mb(c/4m)^2 & d \end{smallmatrix} \big) }
\widetilde{ \big( \begin{smallmatrix} 1 & 0 \\ ac/4m & 1
\end{smallmatrix} \big) }. \] Since $\rho_L \widetilde{ (
\begin{smallmatrix} 1 & 0 \\ ac/4m & 1 \end{smallmatrix} ) }
\sum_{\gamma\in L'/L} \mathfrak e_\gamma = \sum_{\gamma\in L'/L}
\mathfrak e_\gamma$ by Lemma \ref{Lemma - trivial action by
applying Shintani's}, one finds from Borcherds' result
\cite[Theorem 5.4]{Bo} that
\begin{eqnarray*}
\rho_L \widetilde{ \big( \begin{smallmatrix} a & 4mb \\ c/4m & d
\end{smallmatrix} \big) } \sum_{\gamma\in L'/L} \mathfrak e_\gamma & = &
\rho_L \widetilde{ \big( \begin{smallmatrix} a(1 - bc) & 4mb \\
-4mb(c/4m)^2 & d \end{smallmatrix} \big) } \sum_{\gamma\in L'/L}
\mathfrak
e_\gamma \\
& = & \Big( \Big( \frac{-4mb}{d} \Big) \sqrt{\Big( \frac{-1}{d}
\Big) }^{-1} \Big)^{b^+ - b^- + (\frac{-1}{2m}) - 1} \Big(
\frac{d}{4m} \Big) \sum_{\gamma\in L'/L} \mathfrak e_\gamma \\
& = & \Big( \frac{c}{d} \Big) \sqrt{ \Big( \frac{-1}{d} \Big)
}^{-(\frac{-1}{m})} \Big( \frac{m}{d} \Big) \Big( \frac{d}{m}
\Big) \sum_{\gamma\in L'/L} \mathfrak e_\gamma.
\end{eqnarray*}
Because $\sqrt{ ( \frac{-1}{d} ) }^{1 - (\frac{-1}{m})} (
\frac{m}{d} ) ( \frac{d}{m} ) = 1$ we get the following identity:
\begin{eqnarray}\label{Equation - by applying Borcherds' theorem}
\rho_L \widetilde{ \big( \begin{smallmatrix} a & 4mb \\ c/4m & d
\end{smallmatrix} \big) } \sum_{\gamma\in L'/L} \mathfrak e_\gamma = \Big(
\frac{c}{d} \Big) \sqrt{ \Big( \frac{-1}{d} \Big) }^{-1}
\sum_{\gamma\in L'/L} \mathfrak e_\gamma.
\end{eqnarray}
We claim that (\thesection.\ref{Equation - by applying Borcherds'
theorem}) holds true for the case $a < 0$. If $c = 0$, then $a = d =
-1$ and thereby it is straightforward to verify
(\thesection.\ref{Equation - by applying Borcherds' theorem}). So we
assume that $c \neq 0$. If we choose $x \in \mathbb Z$ so that $a +
xc > 0$, then from the elementary identity
\[ \big( \begin{smallmatrix} 1 & 4mx \\ 0 & 1 \end{smallmatrix} \big)
\big( \begin{smallmatrix} a & 4mb \\ c/4m & d \end{smallmatrix}
\big) = \big( \begin{smallmatrix} a + xc & 4m(b + xd) \\ c/4m & d
\end{smallmatrix} \big) \]
one can see that (\thesection.\ref{Equation - by applying Borcherds'
theorem}) holds true even for $a < 0$. Now if we insert
(\thesection.\ref{Equation - by applying Borcherds' theorem}) into
(\thesection.\ref{Equation - via the standard scalar product}), we
obtain
\begin{eqnarray*}
f \Big( \frac{a\tau + b}{c\tau + d} \Big) & = & \Big( \frac{c}{d}
\Big) \sqrt{ \Big( \frac{-1}{d} \Big) }^{-1} (c\tau +
d)^{k+\frac{1}{2}}
\langle F(4m\tau), \sum_{\gamma\in L'/L} \mathfrak e_\gamma \rangle \\
& = & \Big( \frac{c}{d} \Big) \sqrt{ \Big( \frac{-1}{d} \Big) }^{-1}
(c\tau + d)^{k+\frac{1}{2}} f(\tau).
\end{eqnarray*}
This completes the proof of Theorem \ref{Theorem - isomorphism for
half integral weight case}.

\bibliographystyle{amsplain}

\end{document}